\newcommand{\ttt}{{\mathfrak t}}
\newcommand{\uuu}{{\mathfrak u}}
\renewcommand{\sp}{\mathfrak{\mathop{sp}}}
\definecolor{grey}{RGB}{128,128,128}
\numberwithin{equation}{section}
\theoremstyle{plain}
\newtheorem{lemma}[equation]{Lemma}
\newtheorem{proposition}[equation]{Proposition}
\newtheorem{theorem}[equation]{Theorem}
\newtheorem{corollary}[equation]{Corollary}
\theoremstyle{definition}
\newtheorem{definition}[equation]{Definition}
\newtheorem{remark}[equation]{Remark}
\newtheorem{example}[equation]{Example}
\newtheorem*{proposition*}{Proposition}
\newtheorem*{lemma*}{Lemma}
\newcommand{\R}{{\mathds R}}
\newcommand{\C}{{\mathds C}}
\newcommand{\Z}{{\mathds Z}}
\newcommand{\Aa}{{\mathcal A}}
\newcommand{\Ff}{{\mathcal F}}
\newcommand{\Gg}{{\mathcal G}}    %gauge transformations
\newcommand{\Nn}{{\mathcal N}}
\newcommand{\Oo}{{\mathcal O}}
\newcommand{\Zz}{{\mathcal Z}}
\newcommand{\ad}{{\rm ad \,}}
\renewcommand{\a}{{\mathfrak a}}
\newcommand{\g}{{\mathfrak g}}
\newcommand{\h}{{\mathfrak h}}
\renewcommand{\so}{{\mathfrak{so}}}
\newcommand{\ssl}{{\mathfrak {sl}}}
\renewcommand{\t}{{\mathfrak t }}
\newcommand{\GL}{{\rm GL}}
\newcommand{\SL}{{\rm SL}}
\newcommand{\hs}{\kern 0.8pt}
\newcommand{\hsss}{\kern 1.2pt}
\newcommand{\hm}{\kern -0.8pt}
\newcommand{\hmm}{\kern -1.2pt}
\newcommand{\hssh}{\kern 1.2pt}
\newcommand{\hshs}{\kern 1.6pt}
\newcommand{\hssss}{\kern 2.0pt}
\newcommand{\upgam}{\hs^\gamma\kern-0.5pt }
\newcommand{\upalpha}{\hs^\alpha\kern-0.5pt}
\newcommand{\wG}{\widehat{G}}
\newcommand{\wg}{\hat{\g}}
\newcommand{\Gal}{{\rm Gal}}
\newcommand{\Zm}{{\mathcal Z}}
\newcommand{\SmallMatrix}[1]{\text{{\tiny\arraycolsep=0.4\arraycolsep\ensuremath
			{\begin{pmatrix}#1\end{pmatrix}}}}}
\newcommand{\Ho}{{\mathrm{H}\kern 0.3pt}}
\newcommand{\Zl}{{\mathrm{Z}\kern 0.2pt}}
\newcommand{\Bd}{{\mathrm{B}\kern 0.2pt}}
\newcommand{\diag}{{\rm diag}}
\newcommand{\reg}{\mathrm{reg}}
\newcommand{\cc}{\raise 1.7pt \hbox{\Tiny{$\bullet$}}}
\newcommand{\Nm}{{\mathcal N}}
\newcommand{\cC}{{\scriptscriptstyle{\C}}}
\def\upgam{{\hs^\gamma\hm}}
\def\Fm{{\mathcal F}}
\def\Zgg{{\mathfrak z}}
\begin{document}
	
	%%%%%%%%%%%%%%%%%%%%%%%Stuff for Dynkin diag - copied from somewhere%%%
	
	% Max' Tikz styles for dynkin diagrams
	\def\DynkinNodeSize{1.5mm}
	\def\DynkinArrowLength{1.5mm}
	\tikzset{
		% a diagram node
		dnode/.style={
			circle,
			inner sep=0pt,
			minimum size=\DynkinNodeSize,
			fill=white,
			draw},
		middlearrow/.style={
			decoration={markings,
				mark=at position 0.6 with
				%{\arrow[black]{angle 90};}
				%{\arrow[black]{angle 60};}
				%{\arrow[black]{stealth};}
				{\draw (0:0mm) -- +(+135:\DynkinArrowLength); \draw (0:0mm) -- +(-135:\DynkinArrowLength);},
			},
			postaction={decorate}
		},
		leftrightarrow/.style={
			decoration={markings,
				mark=at position 0.999 with
				{
					\draw (0:0mm) -- +(+135:\DynkinArrowLength); \draw (0:0mm) -- +(-135:\DynkinArrowLength);
				},
				mark=at position 0.001 with
				{
					\draw (0:0mm) -- +(+45:\DynkinArrowLength); \draw (0:0mm) -- +(-45:\DynkinArrowLength);
				},
			},
			postaction={decorate}
		},
		% single edge
		sedge/.style={
		},
		% directed double edge
		dedge/.style={
			middlearrow,
			double distance=0.5mm,
		},
		% directed triple edge
		tedge/.style={
			middlearrow,
			double distance=1.0mm+\pgflinewidth,
			postaction={draw}, % third line
		},
		% double edge with two arrows, for \tilde{A}_1 residues
		infedge/.style={
			leftrightarrow,
			double distance=0.5mm,
		},
	}
%%%%%%%%%%%%%%%%%%%%%%%%%%%End stuff for Dyn diag%%%%%%%%%%%%%%%%%%%%%%%%%%

\title[Real semisimple $\Z_m$-graded  Lie algebras]%
{Semisimple elements and the little Weyl group of real semisimple $\Z_m$-graded  Lie algebras}

\author{Willem de Graaf}
\address{Department of Mathematics, University of Trento, Povo (Trento), Italy}
\email{degraaf@science.unitn.it}
	
\author{H\^ong V\^an L\^e}
\address{Institute of Mathematics, Czech  Academy of Sciences,
	Zitna 25, 11567 Praha 1, Czech Republic}
\email{hvle@math.cas.cz}

\thanks{Research  of L\^e was supported  by Institute of Mathematics, Czech Academy of Sciences (RVO: 67985840) and GA\v CR-project  22-00091S}
	
	\date{\today}

\begin{abstract}
We consider the semisimple orbits of a Vinberg $\theta$-representation.
First we take the complex numbers as base field.
By a case by case analysis we show a technical result stating the equality
of two sets of
hyperplanes, one corresponding to the restricted roots of a Cartan subspace,
the other corresponding to the complex reflections in the (little) Weyl group.
The semisimple orbits have representatives in a
finite number of sets that correspond to reflection subgroups of the 
(little) Weyl group. One of the consequences of our technical result is that
the elements in a fixed such set all have the same
stabilizer in the acting group.
Secondly we study what happens when the base field is the real numbers. We
look at Cartan subspaces and show that the real Cartan subspaces can be
classified by the first Galois cohomology set of the normalizer of a fixed
real Cartan subspace. In the real case 
the orbits can be classified using Galois cohomology. However, in order for
that to work we need to know which orbits have a real representative. We show
a theorem that characterizes the orbits of homogeneous semisimple  elements that do have such a real representative.
This closely follows and generalizes a theorem from \cite{bgl}. 
\end{abstract}

\keywords{graded Lie algebra,  $\theta$-group,  homogeneous  semisimple and nilpotent element,  Cartan subspace,  Weyl group, real  Galois  cohomology}

\subjclass{Primary:
%	15A21.  %Canonical forms, reductions, classification
%	Secondary:
	11E72  %Galois cohomology of linear algebraic groups
	, 20G05  %Representation theory for linear algebraic groups
	, 20G20.% Linear algebraic groups over the reals, the complexes, the quaternions
}

\maketitle

\section{Introduction}

In general it is very difficult, if not impossible, to classify in a meaningful
way the orbits of a complex algebraic group acting on a vector space. A notable
exception to this rule is formed by the so-called $\theta$-representations
introduced and studied by Vinberg in the 70's, \cite{Vinberg1976,Vinberg1979}.
These are representations that are constructed from a cyclic grading of a
semisimple Lie algebra. They share many properties with the adjoint
representation of a semisimple algebraic group on its Lie algebra. In
particular, the elements of the module have a Jordan decomposition. This divides
the elements, and hence the orbits, into three groups: nilpotent, semisimple and
mixed. For classifying the nilpotent orbits Vinberg developed a method based
on classifying regular subalgebras of a certain type, called carrier algebras
\cite{Vinberg1979}. There are also algorithms for this purpose based on
embedding a nilpotent element in an $\ssl_2$-triple, \cite{gra11}.

For studying the semisimple orbits the key concept is that of a Cartan subspace
\cite{Vinberg1976}. This is a maximal subspace consisting of commuting
semisimple elements. It plays a role that in many ways is similar to that of
a Cartan subalgebra of a semisimple Lie algebra. In particular, every semisimple
orbit (that is, orbit consisting of semisimple elements) has at least one
point in a given Cartan subspace. Moreover, two elements of a fixed Cartan
subspace lie in the same orbit if and only if they are conjugate relative
to a finite group, called the Weyl group (or little Weyl group)
of the graded semisimple Lie algebra. So one could consider the classification
problem of the semisimple elements to be solved once a Cartan subspace and
corresponding Weyl group have been determined. However, in a particular
example corresponding to a grading of the simple Lie algebra of type $E_8$,
Vinberg and Elashvili have shown that this classification can be refined
\cite{VE1978}. They define seven subsets of a fixed Cartan subspace which
correspond to seven conjugacy classes of reflection subgroups of the Weyl group
of the grading. They show the following:
\begin{enumerate}
\item Every semisimple orbit has a point in exactly one of the seven subsets.
\item Two elements of the same subset lie in the same orbit if and only if
  they are conjugate under an explicitly given finite group.
\item All elements of the same subset have the {\em same} stabilizer in the
  acting algebraic group.
\end{enumerate}
Furthermore, this more refined classification makes it possible to
also classify the mixed elements. We also remark that subsequently a number of
orbit classifications of $\theta$-representations have appeared in the
literature, cf. \cite{an81}, \cite{AE1982}, \cite{gatim}.

In Section \ref{sec:fams} we consider the semisimple orbits in a
$\theta$-representation over the complex field. Let $W$ denote the Weyl group
corresponding to a fixed Cartan subspace $\h^\cC$. Then $W$ is generated by
complex reflections and each complex reflection in $W$ leaves a hyperplane
in $\h^\cC$ pointwise fixed. Denote the set of hyperplanes so obtained by
$A$. The Cartan subspace $\h^\cC$ is contained in a Cartan subalgebra of the
ambient Lie algebra. This Cartan subspace gives rise to a root system.
The kernel of each root when restricted to $\h^\cC$ (when it is not the whole
of $\h^\cC$) is a hyperplane in $\h^\cC$. Denote the set of such hyperplanes
by $B$. We first show by a case by case analysis that $A=B$.
It is always possible to define a finite number of subsets, each corresponding
to a reflection subgroup of the Weyl group, of a given Cartan
subspace having the properties (1), (2) above. From $A=B$ we deduce that 
these subsets also have property (3).

We also look at real forms of a $\theta$-representation. We do not attempt to
define this concept, but illustrate it with an example.
The $\theta$-representation studied in \cite{VE1978} is equivalent to
the action of $\SL(9,\C)$ on the space $\wedge^3 \C^9$. An important real
form of this is the action of $\SL(9,\R)$ on $\wedge^3 \R^9$. In
\cite{bgl} the orbits of this real form are classified, using the
fact that the complex version is constructed as a $\theta$-representation
and using Galois cohomology. For the nilpotent orbits this is rather
straightforward. In many cases (for example those studied in
\cite{VE1978,gatim}) all complex nilpotent orbits have real representatives.
If such representatives are not known, then with the 
methods of \cite{bo21} it can be decided whether the orbit has a real
representative and find one in the affirmative case. Then the first Galois
cohomology set of of the stabilizer of a real representative $e$ yields the
real nilpotent orbits contained in the complex nilpotent orbit of $e$.

If there are semisimple orbits in $\g_1$ then there are infinitely many.
For this reason, classifying the real semisimple orbits offers extra
difficulties because we need a description of the orbits that have real
representatives.
It can happen that the orbit of a given element $v$ in a given
complex Cartan subspace has real points, without $v$ itself being real.
In \cite{bgl} a theorem is shown that explicitly characterizes this
situation. In Section \ref{sec:realss} we show that an analogous theorem holds
in general. This uses the classification of the semisimple orbits involving
a finite number of subsets with (1), (2), (3). Also we study the problem
of classifying real Cartan subspaces: we show that the Cartan subspaces,
up to conjugacy, correspond to the elements of the first Galois cohomology set
of the normalizer of a complex Cartan subspace that is defined over $\R$.

\section{Preliminaries}

\subsection{Vinberg's construction of certain representations of algebraic groups}

Here we briefly describe the construction of Vinberg's $\theta$-representations.
Throughout we write $\Z_m= \Z/m\Z$. 

Let $\g^\cC$ be a complex semisimple Lie algebra equipped with a $\Z_m$-grading
$$\g^\cC = \bigoplus_{i\in \Z_m} \g^\cC_i,$$
where the $\g^\cC_i$ are subspaces such that $[\g^\cC_i,\g^\cC_j]\subset \g^\cC_{i+j}$.

Let $G\subset \GL(\g^{\cC})$ be the identity component of the automorphism group of
$\g^\cC$.
The group $G$ is also known as the adjoint group of $\g^\cC$. It is generated,
as a group, by the maps $\exp(\ad x)$ where $x\in \g^\cC$ is nilpotent.
Its Lie algebra
is $\ad \g^\cC$ which is the Lie algebra of all adjoint maps $\ad x:
\g^\cC \to \g^\cC$, $\ad x(y)=[x,y]$ for $x,y\in \g^\cC$. In the sequel we
identify $\g^\cC$ and $\ad \g^\cC$ and we say that $\g^\cC$ is the Lie algebra
of $G$.

The subalgebra $\g^\cC_0\subset \g^\cC$ is reductive (see, e.g., \cite[Lemma 8.1(c)]{kac})
and hence there is a connected reductive subgroup $G_0$ of $G$ whose Lie
algebra is $\g^\cC_0$ (more precisely, it is $\ad \g^\cC_0$). 
The subspace $\g^\cC_1$ is naturally a $\g^\cC_0$-module,
and hence also a $G_0$-module.

The grading of $\g^\cC$ yields an automorphism $\theta$ of $\g^\cC$ of order $m$ by
setting $\theta(x) = \omega^i x$ for $x\in \g^\cC_i$, where $\omega\in\C$ is a
primitive $m$-th root of unity. Conversely, if $\theta$ is an automorphism of
$\g^\cC$ of order $m$ then letting $\g^\cC_i$ be the eigenspace of $\theta$ with
eigenvalue $\omega^i$ gives a $\Z_m$-grading of $\g^\cC$. Therefore, in the
sequel we denote a $\Z_m$-graded Lie algebra by $(\g^\cC,\theta)$. 

Let $\wG$ be a reductive complex algebraic group with Lie algebra $\wg^\cC$.
Suppose that there exists a surjective morphism $\psi : \wG \to G_0$, such
that its differential (denoted by the same letter) $\psi : \wg^\cC \to \g^\cC_0$
is an isomorphism. Then $\psi$ makes $\g_1^\cC$ into a $\wG$-module by
$g\cdot x = \psi(g)\cdot x$ for $g\in \wG$
and $x\in \g_1^\cC$. Let $V^\cC$ be a $\wG$-module such that there is an
isomorphism of
$\wG$-modules $\g_1^\cC\to V^\cC$. 
Then classifying the orbits of $\wG$ in $V^\cC$
is equivalent to classifying the orbits of $G_0$ in $\g_1^\cC$.

A first remark is that $\g_1^\cC$ is closed under Jordan decomposition.
This means the following. Let $x\in \g_1^\cC$. Then $x\in \g^\cC$ and hence there
are $s,n\in \g^\cC$ such that $s$ is semisimple, $n$ is nilpotent and 
$[s,n]=0$. Furthermore, $\omega s +\omega n=\omega x = \theta(x) = \theta(s)
+\theta(n)$ so that $s,n\in \g_1^\cC$. This implies that the orbits of $G_0$ in
$\g_1^\cC$ are divided into three groups: nilpotent, semisimple and mixed
(the latter are neither semisimple nor nilpotent).

\begin{example}\label{exa:trivec}
Let $\g^\cC$ be the simple complex Lie algebra of type $E_8$. There is an
automorphism $\theta$ of $\g^\cC$ of order 3 such that $\g_0^\cC$ is simple
of type $A_8$. This automorphism is number 9 in the table in \S 9 of
\cite{Vinberg1976}. Let $\wg^\cC= \ssl(9,\C)$ and fix an isomorphism
$\psi : \wg^\cC \to \g_0^\cC$. 
Since $\SL(9,\C)$ is simply connected the isomorphism $\psi$ lifts to a
surjective morphism $\psi : \wG=\SL(9,\C)\to G_0$. By comparing highest weights
(the highest weight of the module $\g_1^\cC$ is also given in Vinberg's table)
we see that the $\ssl(9,\C)$-module $\g_1^\cC$ is isomorphic to
$V^\cC=\wedge^3 \C^9$
(where $\C^9$ is the natural $\ssl(9,\C)$-module). So the same holds for
the $SL(9,\C)$-modules $\g_1^\cC$ and $\wedge^3 \C^9$. The orbits of $\wG$
acting on $V^\cC$ have been classified in \cite{VE1978}. 
\end{example}

\begin{example}\label{exa:63}
Let $\g^\cC$ be the simple complex Lie algebra of type $E_7$. There is
an automorphism $\theta$ of $\g^\cC$ of order 3 such that $\g_0^\cC$ is
semisimple of type $A_2+A_5$.  This automorphism is number 5 in the table in
\S 9 of \cite{Vinberg1976}. Let $\wg^\cC= \ssl(3,\C)+\ssl(6,\C)$ and fix an
isomorphism $\psi : \wg^\cC \to \g_0^\cC$. Again this
lifts to a surjecive morphism $\psi : \wG=\SL(6,\C)\times \SL(3,\C) \to G_0$.
Set $V^\cC = \wedge^2 \C^6 \otimes \C^2$. By comparing highest weights we
see that the $\wG$-module $\g_1^\cC$ is isomorhic to the $\wG$-module $V^\cC$.
The orbits of $\wG$ acting on $V^\cC$ have been classified in \cite{gatim}.
\end{example}

\subsection{Constructing real representations}

Here we indicate how the setup of the previous subsection can be used to
construct a real Lie group acting on a real vector space. 

We suppose that we have an antiregular involution $\sigma$ of $G$.
This means that $\sigma$ is an involution of $G$ such that for any regular
map $f\in \C[G]$ the map $\bar \sigma^*f$ given by
$$\bar\sigma^*f(g) = \overline{f(\sigma(g))}$$
is regular as well (cf. \cite[Definition 1.7.4]{gw}, \cite[\S 2]{bo21}).
One way of constructing such a map is by choosing a
real form $\g$ of $\g^\cC$. By fixing a basis of $\g$ and considering the
matrix of an element of $G$ with respect to that basis we obtain an
injective homomorphism $\phi : G\to \GL(n,\C)$, where $n=\dim \g^\cC$.
For $x\in \g^\cC$ there are unique $x_1,x_2\in \g$ such that $x=x_1+ix_2$
(where $i\in \C$ is the imaginary unit). Abusing the notation a bit we define
$\sigma : \g\to \g$
by $\sigma(x)=x_1-ix_2$. Then $\sigma$ is an antiinvolution:
$\sigma([x,y])=[\sigma(x),\sigma(y)]$, $\sigma(\lambda x) = \bar\lambda
\sigma(x)$ for $\lambda\in \C$ and $\sigma^2(x)=x$. Let
$x\in \g$ be nilpotent, and set $\sigma(\exp(\ad x)) = \exp( \ad \sigma(x))$.
Then
$$\phi(\sigma(\exp(\ad x))) = \overline{ \phi(\exp(\ad x))}$$
(where the last operation is complex conjugation of the matrix entries in
$\GL(n,\C)$). It follows that $\phi(G)$ is closed under complex conjugation.
So for $g\in G$ we can define $\sigma(g) = \phi^{-1} (\overline{\phi(g)})$.
Then it is clear that $\sigma$ is an antiregular involution of $G$. 

{\em We suppose that $\sigma$ leaves the spaces $\g_i^\cC$ invariant}.
Then we get a real form $\g_0=(\g_0^\cC)^\sigma
=\{ x\in \g_0^\cC \mid \sigma(x) = x \}$ acting on $\g_1=(\g_1^\cC)^\sigma$
(which is defined similarly).
This also implies that $\sigma$ leaves $G_0$ invariant, and
$G_0(\R)=G_0^\sigma = \{ g\in G \mid \sigma(g) = g\}$ is a real Lie group with
Lie algebra $\g_0$. Furthermore, $G_0(\R)$ acts on $\g_1$.
Let $\Gamma = \mathrm{Gal}(\C/\R)=\{1,\gamma\}$. Then $\Gamma$ acts on
the various objects discussed here via $\sigma$. For $g\in G$, $x\in \g^\cC$ we
write $\upgam g = \sigma(g)$, $\upgam x = \sigma(x)$. 

We also suppose that we have an antiregular involution $\sigma$ of
$\wG$ and a conjugation $\sigma$ of $V^\cC$ by which $\Gamma$ acts on $\wG$,
$\wg^\cC$ and $V^\cC$. We suppose that the maps $\wG\to G_0$, $\wg^\cC\to \g_0^\cC$,
$\g_1^\cC\to V^\cC$ are $\Gamma$-equivariant, and that $\sigma(g)\cdot \sigma(v) =
\sigma(g\cdot v)$ for $g\in \wG$, $v\in V^\cC$. 
We want to classify the orbits
of $\wG(\R)=\wG^\sigma$ in $V=(V^\cC)^\sigma$, given those of $\wG$ in $V^\cC$.
This is not equivalent
to classifying the $G_0(\R)$-orbits in $\g_1$ because
$\psi : \wG(\R)\to G_0(\R)$ may not be surjective.

\begin{example}\label{exa:realtrivec}
Let the setup be as in Example \ref{exa:trivec}. By explicitly constructing
the automorphism $\theta$ it can be seen that there is a Chevalley basis of
$\g^\cC$ such that the graded components $\g^\cC_i$ are spanned by elements of
this basis. We let $\g$ be the real form of $\g^\cC$ that is the real span of
this basis. Then the spaces $\g^\cC_i$ are invariant under
the conjugation $\sigma$ of $\g^\cC$. For $i\in \Z_3$ we set $\g_i =
(\g_i^\cC)^\sigma = \{x\in \g_i^\cC \mid \sigma(x)=x\}$. 

On $\SL(9,\C)$ we consider the antiregular map $\sigma$ which is given by the
complex conjugation of the matrix entries of an element of $\SL(9,\C)$.
Its differential $\sigma : \ssl(9,\C)\to \ssl(9,\C)$ is again given by the
complex conjugation of matrix entries. We assume, as we may, that the
map $\psi : \ssl(9,\C)\to \g_0^\cC$
maps $\ssl(9,\R)$ onto $\g_0$. Hence it is $\Gamma$-equivariant. It follows
that also the map $\psi : \SL(9,\C)\to G_0$ is $\Gamma$-equivariant.
So we obtain a representation of $\SL(9,\R)$ on the space $\g_1$. This
representation is equivalent to the natural representation of $\SL(9,\R)$ on
the space $\wedge^3 \R^9$. In this case the map $\psi : \SL(9,\R) \to
G_0(\R)$ is an isomorphism by \cite[Corollary 3.3.14]{bgl21}.

%\van{Note that in this case   $\SL(9,\R) = G_0(\R)$  by \cite[Corollary 3.3.14]{bgl21}. }

\end{example}

\begin{example}\label{exa:real63}
Let $\g^\cC$, $\theta$ be as in Example \ref{exa:63}. With the same arguments
as in the previous example we construct a representation of $\SL(6,\R)\times
\SL(3,\R)$ which is equivalent to the natural representation of this group
on $\wedge^2 \R^6\otimes \R^2$.
\end{example}  

\subsection{Galois cohomology}

Here we give a very brief introduction to the parts of Galois cohomology that
we need. For more information we refer to \cite{Serre1997}. Here we just
consider Galois cohomology relative to the Galois group $\Gal(\C/\R)$.
This makes it possible to simplify the basic definitions somewhat. However,
the resulting concepts are the same as the usual ones.

Let $\Gamma= \Gal(\C/\R) = \{ 1,\gamma\}$, with $\gamma^2=1$. Let $\Gg$ be
a group on which $\Gamma$ acts by automorphisms. Let $X$ be a set on which
$\Gamma$ acts. For $g\in \Gg$, $x\in X$ we denote their images under
$\gamma$ by $\upgam g$, $\upgam x$. We also suppose that $\Gg$ acts on
$X$ and that this action is $\Gamma$-equivariant: $\upgam g \cdot \upgam x =
\upgam (g\cdot x)$ for all $g\in \Gg$, $x\in X$. 

An element of $\Gg$ is called a {\em cocycle} if $g\upgam g=1$.
Two cocycles $g_1,g_2$ are equivalent if there is an $h\in \Gg$
with $g_1 = h^{-1} g_2 \upgam h$. The first cohomology set $\Ho^1 \Gg$ is the
set of equivalence classes of cocycles. For a cocycle $g$ we denote its
equivalence class by $[g]$ which thus lies in $\Ho^1 \Gg$.

Let $x_0\in X$ be such that $\upgam x_0 = x_0$. Let $Y = \Gg\cdot x_0$ be its
orbit. Let $Z_{x_0} = \{ g\in \Gg\mid g\cdot x_0=x_0\}$ be the stabilizer of
$x_0$. We have a natural map $i_* : \Ho^1 Z_{x_0} \to \Ho^1 \Gg$ mapping the
class of a cocycle $z$ in $Z_{x_0}$ to its class in $\Ho^1 \Gg$. The
kernel of this map consists of all elements that are sent to the trivial
class, so
$$\ker i_* = \{ [z]\in \Ho^1 \Gg \mid z\text{ is equivalent to } 1 \text{ in }
\Gg\}.$$

By $\Gg^\Gamma$, $Y^\Gamma$ we denote the elements of $\Gg$, $Y$ respectively
that are fixed under $\gamma$. We now state the main theorem that we need.
For a proof see \cite[Section I.5.4, Corollary 1 of Proposition 36]{Serre1997}.

\begin{theorem}\label{thm:cohom}
The orbits of $\Gg^\Gamma$ on $Y^\Gamma$ are in bijection with $\ker i_*$.
This bijection is given as follows. Let $[z]\in \ker i_*$, then there is a
$g\in \Gg$ with $z=g^{-1} \upgam g$ and $[z]$ corresponds to the orbit of
$g\cdot x_0$. 
\end{theorem}

\section{Stabilizers of semisimple elements}\label{sec:fams}

Let $(\g^\cC,\theta)$ be a $\Z_m$-graded complex semisimple Lie algebra.
A {\em Cartan subspace} in $\g_1^\cC$ is a maximal subspace consisting of
commuting semisimple
elements. Vinberg (\cite[Theorem 1]{Vinberg1976}) showed that any two Cartan
subspaces of $\g_1^\cC$ are $G_0$-conjugate.

Fix a Cartan subspace $\h^\cC$ in $\g_1^\cC$, and define
\begin{align*}
\Nm_{G_0}(\h^\cC) &= \{ g\in G_0 \mid g\cdot x \in \h^\cC \text{ for all } x\in \h^\cC\},\\
\Zm_{G_0}(\h^\cC) &= \{ g\in G_0 \mid g\cdot x =x \text{ for all } x\in \h^\cC\},\\
W(\g^\cC,\theta) &= \Nm_{G_0}(\h^\cC)/\Zm_{G_0}(\h^\cC).
\end{align*}

The group $W(\g^\cC,\theta)$ is called the {\em Weyl group} of the graded Lie
algebra $(\g^\cC,\theta)$. Since two Cartan subspaces are $G_0$-conjugate,
the Weyl group does not depend (up to isomorphism) on the choice of Cartan
subspace. When $\g^\cC$ and $\theta$ are clear from the context we also
write $W$ instead of $W(\g^\cC,\theta)$. 

Vinberg \cite[Theorem 8]{Vinberg1976} showed that $W$ is generated by
complex reflections. These are invertible linear maps $w$ of $\h^\cC$ of
finite order such that $H_w=\{ p\in \h^\cC \mid wp = p \}$ is of codimension 1.
In this case we say that $H_w$ is a {\em reflection hyperplane} in $\h^\cC$.

By $R(W)$ we
denote the set of all complex reflections in $W$. By
\cite[Lemmas 1.3, 1.6]{lehta} we have that if $H_u=H_v$ for $u,v\in R(W)$
then $1-u$ and $1-v$ have the same image. Hence for a fixed $w\in R(W)$, the
subgroup of $W$ generated by all $u\in R(W)$ such that $H_u=H_w$ is cyclic.

\subsection{Two sets of hyperplanes}

Let $\Sigma(\h^\cC)$ denote the set of the nonzero $\sigma$ in the dual space of
$\h^\cC$ such that there are nonzero $x\in \g^\cC$ with $[h,x] = \sigma(h) x$
for all $h\in \h^\cC$. Consider a Cartan subalgebra of $\g^\cC$ containing
$\h^\cC$ and the corresponding root system. Then the elements of $\Sigma(\h^\cC)$
are the restrictions of the roots in this root system to $\h^\cC$. For this
reason the set  $\Sigma(\h^\cC)$ is called the set of
{\em restricted roots} of $\h^\cC$.

For each $\sigma\in \Sigma(\h^\cC)$ we define the hyperplane
$H_\sigma =\{ h\in \h^\cC \mid \sigma(h)=0\}$. We say that $H_\sigma$ is a
{\em root hyperplane} in $\h^\cC$.

\begin{theorem}\label{thm:hyperplanes}
The set of root hyperplanes in $\h^\cC$ is equal to the set of reflection
hyperplanes in $\h^\cC$.  
\end{theorem}  

\begin{proof}
The proof of the theorem is based on a case by case analysis for simple
$\g$.

First we look at $\g^\cC$ of exceptional type. In this
case all gradings of positive rank (that is, so that the Cartan subspace is
nonzero) are known, see \cite{rlyg}, \cite{levy13}. We can 
skip the gradings of rank 1, as the theorem holds trivially in that case.
For the remaining cases we compute a Cartan subspace $\h^\cC$ and 
the Weyl group $W$ by the algorithm of
\cite{go}. From $W$ we immediately find all its reflections and corresponding
reflection hyperplanes. We also compute a Cartan subalgebra containing
$\h^\cC$, the corresponding root system, the set of restricted roots and
the corresponding root hyperplanes. We have
carried out these computations in the {\sc Magma} computational algebra
system, \cite{magma}. In all cases it turns out that Theorem
\ref{thm:hyperplanes} holds.

For $\g$ of classical type the Weyl groups have been determined in
\cite{Vinberg1976}, see also \cite{levy}. In all cases the Weyl group
is equal to a complex reflection group denoted $G(m,p,r)$. This is the
group consisting of all monomial $r\times r$-matrices whose nonzero entries
$x_1,\ldots,x_r$ satisfy $x_i^m=1$ and $(x_1\cdots x_r)^{\tfrac{m}{p}}=1$. 

Let $c_1,\ldots,c_r$ denote the standard basis of $\C^r$.
In \cite[Lemma 2.8]{lehta} the reflections in $G(m,p,r)$ are characterized.
These are divided into two groups. The first group consists of the
matrices $g$ such that $gc_j= c_j$ for $j\neq i$ and $gc_i = \zeta c_i$ where
$\zeta$ is an $\tfrac{m}{p}$-th root of unity. The second group consists of
the matrices $g$ such that $gc_k=c_k$ for $k\neq i,j$ and $gc_i = \epsilon c_j$,
$gc_j = \epsilon^{-1}c_i$ where $\epsilon$ is an $m$-th root of unity. It follows
that the reflection hyperplanes are exactly the hyperplanes spanned by
$\{c_k \mid k\neq i \}$ for $1\leq i\leq r$ and
$\{c_k\mid k\neq i,j \}\cup \{c_i+\epsilon c_j\}$.
(In particular, the number of reflection hyperplanes is $r+m{r\choose 2}$.)

Now we distinguish a few cases.

{\bf Case 1: $\g = \ssl(n,\C)$ and $\theta$ an inner automorphism of order $m$.}
We consider the Cartan subalgebra of $\g$ consisting of all traceless
diagonal matrices. We may assume that this Cartan subalgebra is $\theta$-stable
and contains a Cartan subspace of $\g_1^\cC$, see, e.g., \cite[\S 3]{Vinberg1976}. Then $\theta$
induces an element of the Weyl group of the Cartan subalgebra (which is the
symmetric group of degree $n$).
Levy (\cite[Lemma 4.5]{levy}) showed that we may assume that this element is
$$(1,\ldots,m)(m+1,\ldots,2m)\cdots ((r-1)m+1,\ldots,rm).$$
Then a Cartan subspace $\h^\cC$ is spanned by $c_1,\ldots,c_r$ where
$c_i$ is the diagonal matrix with $c_i(j,j) = \omega^{-j}$ for
$(i-1)m < j\leq im$ and $c_i(j,j)=0$ otherwise (\cite[Lemma 4.5]{levy}). 
The positive roots of $\g$ are $\alpha_{ij}$ with
$i<j$ where $\alpha_{ij}(\diag(a_1,\ldots,a_n)) = a_i-a_j$. This means that
the root hyperplanes of $\h^\cC$ consist of
those spanned by $c_k$ for $k\neq l$, where $1\leq l\leq m$, along with
those spanned by $\omega^{-j}c_k+\omega^{-i}c_l$ and $c_s$ for $s\neq k,l$, where
$1\leq k < l \leq m$. The Weyl group $W$ in this case is the complex reflection
group $G(m,1,r)$, \cite[Lemma 4.5]{levy}. By the above characterization of the
reflection hyperplanes of this group it follows that the root hyperplanes
coicinde with the reflection hyperplanes.

{\bf Case 2: $\g = \ssl(n,\C)$ and $\theta$ an outer automorphism of order $m$.}
For this we use
the descriptions of Levy in \cite[\S 4.6]{levy}. Here the order $m$ of
$\theta$ is even. We have two cases. In the first case $\tfrac{m}{2}$ is even.
Then we may assume that $\theta$ is such that a Cartan subspace is spanned
by $c_1,\ldots,c_r$ where $c_i$ is the diagonal matrix with $c_i(j,j) =
(-\omega)^{-j}$ for $(i-1)m+1\leq j\leq im$ and $c_i(j,j)=0$ otherwise.
The hyperplanes corresponding to the restricted roots are spanned by
$\{ c_s \mid s\neq i\}$ for $1\leq i\leq r$ and $\{c_s\mid s\neq k,l\}\cup
\{(-\omega)^{-j}c_k+(-\omega)^{-i}c_l\}$. In this case the complex numbers
$(-\omega)^k$ for $1\leq k\leq m$ are distict. So in the second group of
hyperplanes we have $m{r\choose 2}$ distinct hyperplanes spanned by
$\{c_s\mid s\neq k,l\}\cup \{c_k+(-\omega)^{-i}c_l\}$ for $1\leq k<l\leq r$ and
$0\leq i\leq m-1$. Here we have $W=G(m,p,r)$ with $p=1$ or
$p=2$. Hence also in this case the root hyperplanes and reflection hyperplanes
coicide.

If $\tfrac{m}{2}$ is odd then there can be two types of Cartan subspace.
The first type has the same description as in the case where $\tfrac{m}{2}$
is even. So in that case we get the same root hyperplanes. However, as in this
case $(-\omega)^{\tfrac{m}{2}}=1$ we only have the hyperplanes spanned by
$\{c_s\mid s\neq k,l\}\cup\{c_k+(-\omega)^{-i}c_l\}$ for $0\leq i< \tfrac{m}{2}$.
So we get $\tfrac{m}{2}{r\choose 2} +r$ root hyperplanes. The second type
of Cartan subspace is  spanned by $c_1,\ldots,c_r$ where $c_i$ is the diagonal
matrix with $c_i(j,j) =(-\omega)^{-j}$ for $(i-1)m'+1\leq j\leq im'$
and $c_i(j,j)=0$ otherwise, where $m'=\tfrac{m}{2}$. It is straightforward to
see that also in this type the root hyperplanes have the same descriptions.
As here $W=G(\tfrac{m}{2},1,r)$ (\cite[Lemma 4.19]{levy}) we have
verified the theorem also in this case.

{\bf Case 3: $\g = \sp(2n,\C)$ and $\theta$ an inner automorphism of order $m$.}
Now we consider the simple Lie algebras $\sp(2n,\C)$. We use the following
construction from \cite{levy}. Consider the $n\times n$-matrix
$$J_n = \begin{pmatrix} & & 1 \\ & \iddots & \\ 1 & & \end{pmatrix}$$
and the $2n\times 2n$-matrix $M=\begin{pmatrix} 0 & J_n\\ -J_n & 0
\end{pmatrix}$. Then
$$\sp(2n,\C) = \{ x\in \ssl(2n,\C) \mid Mx^t = -Mx\}.$$
We denote the $2n\times 2n$-matrix with a 1 on position $(i,j)$ and zeros
elsewhere by $e_{i,j}$. 
A Cartan subalgebra $\ttt$ of $\sp(2n,\C)$ is spanned by $h_1,\ldots,h_n$ where
$h_k = e_{k,k} -e_{2n+1-k,2n+1-k}$. For a root $\alpha$ we set
$K_\alpha = \{ h\in \ttt \mid \alpha(h)=0\}$, which is a subspace of codimension
1 of $\ttt$. It is straightforward to compute the root vectors and hence the
roots. It follows that the $K_\alpha$ have bases $\{h_k \mid k\neq i\}$,
$\{h_k \mid k\neq i,j\}\cup\{h_i+h_j\}$, $\{h_k \mid k\neq i,j\}\cup\{h_i-h_j\}$,
where $1\leq i< j\leq n$. The Weyl group is the semidirect product of
$S_n$ (permuting the $h_k$) and $\mu_2^n$ (where $\mu_2=\{1,-1\}$ and
$(\epsilon_1,\ldots,\epsilon_n)\cdot h_i = \epsilon_i h_i$). Let $\theta$
be an automorphism of $\sp(2n,\C)$ preserving $\ttt$. Then the restriction
of $\theta$ to $\ttt$ is an element $w$ of the Weyl group. We distinguish
two cases.

In the first case $m$ is odd. Then \cite[Lemma 4.9]{levy} says that
$W=G(2m,1,r)$. Hence the number of hyperplanes corresponding to the reflections
in $W$ is equal to $r+2m{r\choose 2}$. The proof of the cited lemma
shows that we may assume that $w$ is a product of $r$ disjoint $m$-cycles
exactly as in the case for $\ssl(n,\C)$, i.e.,
$$w=(1,\ldots,m)(m+1,\ldots,2m)\cdots ((r-1)m+1,\ldots,rm).$$
The Cartan subspace $\h^\cC$ with basis $c_1,\ldots,c_r$ is also
defined similarly. This means that
$$c_i = \sum_{j=(i-1)m+1}^{im} \omega^{-j} h_j.$$
The hyperplanes corresponding to the
restricted roots of $\h^\cC$ are the intersections of the hyperplanes
$K_\alpha$ and $\h^\cC$.  So we get the hyperplanes spanned by $\{c_k\mid
k\neq i\}$ for $1\leq i\leq r$, $\{c_k\mid k\neq i,j\}\cup\{ \omega^l c_i
-c_j\}$ for $1\leq i < j \leq r$ and $0\leq l\leq m-1$ and $\{c_k\mid k\neq
i,j\}\cup\{ \omega^l c_i+c_j\}$ for $1\leq i < j \leq r$ and $0\leq l\leq m-1$.
Note that $-\omega$ is a primitive $2m$-th root of unity. So the hyperplanes
of the last two groups above are spanned by  $\{c_k\mid k\neq
i,j\}\cup\{ c_i+ (-\omega)^lc_j\}$ for $1\leq i < j \leq r$ and $0\leq l\leq
2m-1$. Again we see that the reflection hyperplanes equal the root hyperplanes.

In the second case $m$ is even. By \cite[Lemma 4.9]{levy} we have
$W=G(m,1,r)$. 
The proof of the cited lemma says that for $w$ we have
two possibilities. The first of these has $w$ equal to a product of $r$
$m$-cycles, as above. However, in this case we have $\omega^{m/2}=-1$.
This implies that the set of hyperplanes spanned by  $\{c_k\mid k\neq
i,j\}\cup\{ \omega^l c_i+c_j\}$ is equal to the set of hyperplanes spanned
by  $\{c_k\mid k\neq i,j\}\cup\{ \omega^l c_i-c_j\}$. So the set of root
hyperplanes is equal to the set of reflection hyperplanes.
The second possibility is that $w$ is a
product of $r$ negative $\tfrac{m}{2}$-cycles. Here a negative
$\tfrac{m}{2}$-cycle $(i_1,\ldots,i_{\tfrac{m}{2}})$ maps
$$h_{i_1} \to h_{i_2}\to \cdots h_{i_{\tfrac{m}{2}}} \to -h_{i_1}.$$
We may assume that $w=(1,\ldots,\tfrac{m}{2})(\tfrac{m}{2}+1,\ldots,
m)\cdots ((r-1)\tfrac{m}{2}+1,\ldots,r\tfrac{m}{2})$.
Since $\omega^{m/2}=-1$ we get a Cartan subspace $\h^\cC$ spanned by
$c_1,\ldots,c_r$ with 
$$c_i = \sum_{j=(i-1)\tfrac{m}{2}+1}^{i\tfrac{m}{2}} \omega^{-j} h_j.$$
Therefore the set of root hyperplanes has the same description as in the
case where $w$ is a product of $r$ $m$-cycles. So we also get the same
conclusion.

{\bf Case 4: $\g = \so(2n+1,\C)$ and $\theta$ an inner automorphism of order
  $m$.}
As in \cite{levy} we use the following construction
$$\so(2n+1,\C) = \{ x\in \ssl(2n+1,\C) \mid J_{2n+1}x^t = -xJ_{2n+2}\}.$$
A basis of this Lie algebra consists of the elements $x_{i,j}=e_{i,j} -
e_{2n+2-j,2n+2-i}$ for $1\leq i\leq 2n+1$, $1\leq j\leq 2n+1-i$. A Cartan
subalgebra $\ttt$ is spanned by $h_1,\ldots,h_n$ where $h_k = x_{k,k}$ for
$1\leq k\leq n$. The $x_{i,j}$ with $i\neq j$ are root vectors. 
From this it is straightforward to determine the roots and the corresponding
hyperplanes $K_\alpha$, which turn out to have the same description as for
$\sp(2n,\C)$. By \cite[Lemma 4.11]{levy} we have that if $m$ is odd then
$W=G(2m,1,r)$ and $w$ may be assumed to be a product of $r$ disjoint
$m$-cycles. If $m$ is even, by the same lemma, $W=G(m,1,r)$ and
we may suppose that $w$ is a product of $r$ negative $\tfrac{m}{2}$-cycles.
In both cases we obtain the desired conclusion in exactly the same way as
for $\sp(2n,\C)$.

{\bf Case 5: $\g = \so(2n,\C)$ and $\theta$ an automorphism of order $m$.}
For the algebras of type $D_n$ we use
$$\so(2n,\C) = \{ x\in \ssl(2n,\C)\mid J_{2n}x^t = -xJ_{2n}\}.$$
A basis of this Lie algebra consists of the elements $x_{i,j}=e_{i,j} -
e_{2n+1-j,2n+1-i}$ for $1\leq i\leq 2n-1$, $1\leq j\leq 2n-i$. A Cartan
subalgebra $\ttt$ is spanned by $h_1,\ldots,h_n$ where $h_k = x_{k,k}$ for
$1\leq k\leq n$. The $x_{i,j}$ with $i\neq j$ are root vectors. 
From this it is straightforward to determine the roots and the corresponding
hyperplanes $K_\alpha$ which are spanned by 
$\{h_k \mid k\neq i,j\}\cup\{h_i+h_j\}$, $\{h_k \mid k\neq i,j\}\cup\{h_i-h_j\}$
for $1\leq i<j\leq n$. {\em If $m$ is odd} then we may assume that $w$ is a
product
of $r$ positive $m$-cycles. Furthermore, by \cite[Lemma 4.13]{levy} we have
that $W=G(2m,p,r)$ with $p=1$ or $p=2$. We have the same set of reflection
hyperplanes in both cases.
{\em If $m$ is even} then we have $W=G(m,p,r)$ with $p=1$ or $p=2$ and we may
assume
that $w$ is a product of $r$ positive $m$-cycles, $r$ negative
$\tfrac{m}{2}$-cycles or  $r$ negative $\tfrac{m}{2}$-cycles and one negative
$1$-cycle.  In all cases we get the desired conclusion in 
exactly the same way as for $\sp(2n,\C)$.
\end{proof}

\begin{example}
We consider the automorphism $\theta$ of order 3 of the simple Lie algebra of
type $E_8$ from Example \ref{exa:trivec}. In \cite{VE1978}, Vinberg and
Elashvili explicitly described a Cartan subspace $\h^\cC$ in $\g_1^\cC$.
Furthermore, they explicitly constructed the generating reflections of the
Weyl group. Here we summarize this construction. It shows that Theorem
\ref{thm:hyperplanes} holds in this case.

Let $\ttt^\cC$ denote the centralizer of $\h^\cC$ in $\g^\cC$. It is a Cartan
subalgebra of $\g^\cC$. It is invariant under $\theta$. Denote by $\theta_*$ the
linear transformation of the dual space $(\ttt^\cC)^*$ given by
$(\theta_*(\alpha))(h) = \alpha(\theta(h))$ for $h\in \ttt^\cC$. In
\cite{VE1978} it is shown that for a root $\alpha$ relative to $\ttt^\cC$
the elements $\pm \alpha$, $\pm \theta_*(\alpha)$, $\pm \theta_*^2(\alpha)$
form a root subsystem of type $A_2$. Let $\uuu(\alpha)$ denote the regular
subalgebra of $\g^\cC$ with this root system. Let $U(\alpha)\subset
G$ be the subgroup corresponding to this subalgebra. The restriction of
$\theta$ to $\uuu(\alpha)$ has to be an inner automorphism. Hence there is
a $g_\alpha \in U(\alpha)$ such that $g_\alpha(x) = \theta(x)$ for
$x\in \uuu(\alpha)$. We have that $g_\alpha \in G_0$.

We have a decomposition $\ttt^\cC = \ttt(\alpha)\oplus \ttt_0(\alpha)$,
where $\ttt(\alpha) = \ttt\cap \uuu(\alpha)$ is of dimension 2 and
$\ttt_0(\alpha) = \{ x\in \ttt^\cC \mid \alpha(x) = \theta_*(\alpha)(x)=0\}$.
The Cartan subspace decomposes accordingly, $\h^\cC = \h(\alpha)\oplus
\h_0(\alpha)$ where $\h(\alpha) = \ttt(\alpha)\cap \h^\cC$ and likewise for
$\h_0(\alpha)$. We have that $\dim \h(\alpha)=1$.
The element $g_\alpha$ stabilizes $\h^\cC$ hence its restriction $w_\alpha$
to $\h^\cC$ belongs to the Weyl group $W$. Furthermore
$w_\alpha(x) = \omega x$ for $x\in \h(\alpha)$ and $w_\alpha(x)=x$ for $x\in
\h_0(\alpha)$. It follows that $w_\alpha$ is a complex reflection.

The 240 roots of $\g^\cC$ with respect to $\ttt^\cC$ are partitioned into
40 subsets of type $A_2$ as above. Hence we obtain 40 reflections $w_\alpha$.
Together with their squares they exhaust all complex reflections in $W$.
All roots in the same $A_2$ subsystem give the same root hyperplane $\h_0(
\alpha)$. It follows that the root hyperplanes coincide with the reflection
hyperplanes.
\end{example}

\subsection{Two consequences}

In this section we derive two interesting consequences of Theorem
\ref{thm:hyperplanes}. These are Theorems \ref{thm:circreg} and \ref{thm:fam}.

For $p\in \h^\cC$ let
$$W_p =\{ w\in W \mid w\cdot p = p\}$$
denote its stabilizer in $W$. A subgroup of $W$ is said to be a
{\em reflection subgroup} if it is generated by elements of $R(W)$. 
By a theorem of Steinberg, \cite[Theorem 1.5]{steinberg}, $W_p$ is a reflection
subgroup of $W$. Hence it is generated by all $w\in R(W)$ such that $p\in H_w$.

For $p\in \h^\cC$ we define
\begin{align*}
\h_p^\cC &= \{ q\in \h^\cC \mid w\cdot q = q \text{ for all } w \in W_p\} =
\{ q\in \h^\cC \mid W_p \subset W_q\},\\
\h_p^{\cC,\circ} &= \{ q\in \h^\cC\mid W_q= W_p\}.
\end{align*}

Let $w_1,\ldots, w_k$ be a set of complex reflections generating $W_p$.
Then $\h_p^\cC$ is the intersection of the $H_{w_i}$ for $1\leq i\leq k$.
So $\h_p^\cC$ is a subspace of $\h$. Furthermore, $q\in \h_p^{\cC,\circ}$ if and
only if
$q\in\h_p^\cC$ and $(w-1)q\neq 0$ for all complex reflections $w\in R(W)$
that do not lie in $W_p$. So $\h_p^{\cC,\circ}$ is Zariski-open in $\h_p^\cC$.
 
There is a finite number of reflection subgroups in $W$. Hence there is also a
finite number of sets $\h_p^{\cC,\circ}$, and $\h^\cC$ is their disjoint union.

It is straightforward to see that for $p,p'\in \h^\cC$ and $w\in W(\g^\cC,\theta)$ we have
\begin{equation}\label{eq:CpCq}
w\h_p^{\cC,\circ} = \h_{p'}^{\cC,\circ} \text{ if and only if } wW_pw^{-1} = W_{p'}.
\end{equation}  

Indeed, if $w\h_p^{\cC,\circ} = \h_{p'}^{\cC,\circ}$ then in particular
$wp\in \h_{p'}^{\cC,\circ}$
so that $W_{wp} = W_{p'}$, which is the same as $wW_pw^{-1} = W_{p'}$. For the
converse let $q\in \h_p^{\cC,\circ}$; then $W_{wq} = wW_qw^{-1} = wW_pw^{-1} = W_{p'}$,
so that $wq\in \h_{p'}^{\cC,\circ}$. We conclude $w\h_p^{\cC,\circ} \subset
\h_{p'}^{\cC,\circ}$.
In the same way we see that $w^{-1} \h_{p'}^{\cC,\circ}\subset \h_p^{\cC,\circ}$
which implies
that $w\h_p^{\cC,\circ} = \h_{p'}^{\cC,\circ}$.

For a subspace $\a^\cC\subset \h^\cC$ let
$$\Sigma(\a^\cC)= \{ \sigma \in \Sigma(\h^\cC) \mid \text{ there is a }
p\in \a^\cC \text{ such that } \sigma(p)\neq 0\}.$$
Then for $p\in \h^\cC$ we define
$$\h_p^{\cC,\reg}=\{ q\in \h_p^\cC \mid \sigma(q)\neq 0 \text{ for all }
\sigma\in \Sigma(\h_p^\cC)\}.$$

\begin{theorem}\label{thm:circreg}
Let $r\in \h^\cC$ then $\h_r^{\cC,\circ} = \h_r^{\cC,\reg}$.
\end{theorem}

\begin{proof}
Let $r\in \h^\cC$ and let $q\in \h_r^{\cC,\circ}$;
we show that $q\in \h_r^{\cC,\reg}$. Let $\sigma\in \Sigma(\h_r^\cC)$. By Theorem
\ref{thm:hyperplanes} there is a $w\in R(W)$ such that
$H_w=H_\sigma$. There is a $p\in \h_r^{\cC}$ with $\sigma(p)\neq 0$. This implies
$w(p)\neq p$ and hence $w\not\in W_r$ (by definition of $\h_r^\cC$). Therefore
$w\not\in W_q$ so that $q\not\in H_w$ and $\sigma(q)\neq 0$. We conclude
that $q\in \h_r^{\cC,\reg}$.

Conversely, let $q\in \h_r^{\cC,\reg}$. Let $w\in R(W)$, $w\not\in W_r$; we
show that $w\not\in W_q$. By Theorem \ref{thm:hyperplanes}
there is a $\sigma\in \Sigma(\h^\cC)$ such that $H_\sigma=H_w$.
Hence $w(r)\neq r$ is the same as $\sigma(r)\neq 0$. So $\sigma\in
\Sigma(\h_r^{\cC})$. 
This implies that $\sigma(q)\neq 0$ and hence $w(q)\neq q$.
As $W_q$ is a reflection subgroup of $W$ and $W_r\subset W_q$
we obtain that $W_q=W_r$ and $q\in \h_r^{\cC,\circ}$.
\end{proof}  

To formulate the second consequence of Theorem \ref{thm:hyperplanes} we need the
following definition.

\begin{definition}
We say that $p,q\in \h^\cC$ lie in the same {\em Jordan class} 
if the centralizers $\Zgg_{\g^\cC}(p)$, $\Zgg_{\g^\cC}(q)$
\end{definition}

One can define Jordan classes for general elements in $\g_1$, see \cite{ces}.
For semisimple elements the general definition reduces to the one given here.
In \cite{ces} Jordan classes in $\g_1$ are studied in detail. Here we
give a characterization of the Jordan classes in $\h^\cC$ in terms of
stabilizers in the Weyl group.

\begin{lemma}\label{lem:cenconj}
Let $p,q\in \h^\cC$ and suppose that $\Zgg_{\g^\cC}(p)$, $\Zgg_{\g^\cC}(q)$ are
$G_0$-conjugate. Then there exists a $g\in \Nm_{G_0}(\h^\cC)$ such that
$g\cdot \Zgg_{\g^\cC}(p) = \Zgg_{\g^\cC}(q)$. 
\end{lemma}

\begin{proof}
Let $g'\in G_0$ be such that $g'\cdot \Zgg_{\g^\cC}(p) = \Zgg_{\g^\cC}(q)$.
Then $\h^\cC$ and $g'\cdot \h^\cC$ are Cartan subspaces of the $\Z_m$-graded
Lie algebra $(\Zgg_{\g^\cC}(q), \theta)$. Hence by Vinberg's theorem
(\cite[Theorem 1]{Vinberg1976}) there is a $g''\in \Zz_{G_0}(q)$ with
$g'' g' \h^\cC = \h^\cC$. As $g''\in \Zz_{G_0}(q)$ we also have
$g'' \cdot \Zgg_{\g^\cC}(q) = \Zgg_{\g^\cC}(q)$. Hence with $g = g''g'$ we get
the conclusion of the lemma.
\end{proof}

\begin{lemma}\label{lem:gfamWfam}
If $p,q\in \h^\cC$ lie in the same $\g^\cC$-family then the stabilizers
$W_p$, $W_q$ are $W$-conjugate.  
\end{lemma}

\begin{proof}
By the previous lemma there is a $g\in \Nm_{G_0}(\h^\cC)$ such that
$g\cdot \Zgg_{\g^\cC}(p) = \Zgg_{\g^\cC}(q)$. By Steinberg's theorem
(\cite[Corollary 3.11]{steinberg75}) the groups $\Zz_G(p)$, $\Zz_G(q)$ are
connected. The Lie algebra of $g\Zz_G(p)g^{-1}$ is $g\cdot \Zgg_{\g^\cC}(p)$. 
Hence $g\Zz_G(p) g^{-1} = \Zz_G(q)$. As $\Zz_{G_0}(p) =
\Zz_{G}(p)\cap G_0$, and similarly for $\Zz_{G_0}(q)$, we also have
$g\Zz_{G_0}(p) g^{-1} = \Zz_{G_0}(q)$. Let $w\in W$ be the element induced
by $g$. Let $u\in W_p$ be induced by $g_u\in \Nm_{G_0}(\h^\cC)$. Then
$g_u\in \Zz_{G_0}(p)$. Furthermore, $gg_ug^{-1} \in \Zz_{G_0}(q)$ translates
to $wuw^{-1}\in W_q$. It follows that $wW_pw^{-1} \subset W_q$. The other
inclusion is proved similarly. 
\end{proof}

For a subset $S\subset \h^\cC$ we set $\Zgg_{\g^\cC}(S) = \{x\in \g^\cC\mid
[x,s]=0 \text{ for all } s\in S\}$. Also define $\Aa_S = \{ \sigma \in
\Sigma(\h^\cC) \mid \sigma(s)=0 \text{ for all } s\in S\}$. If $S=\{p\}$
then we also write $\Zgg_{\g^c}(p)$ and $\Aa_p$. Now we have the following
elementary fact
\begin{equation}\label{eq:centralizer}
\Zgg_{\g^\cC} (S) = \Zgg_{\g^\cC}(\h^\cC) \oplus \bigoplus_{\sigma\in \Aa_S}
\g^\cC_\sigma.
\end{equation}

The next theorem is stated without proof in \cite{AE1982}. Since we could not
find a proof in the literature we have included a proof in the present paper.

\begin{theorem}\label{thm:fam}
Two elements $p,q\in \h^\cC$ lie in the same Jordan class if and only if
the stabilizers $W_p$, $W_q$ are $W$-conjugate.  
\end{theorem}

\begin{proof}
 We first show that for $r\in \h^\cC$ we have
$\Zgg_{\g^\cC}(r) = \Zgg_{\g^\cC}(\h_r^{\cC,\reg})$. Let $\sigma\in \Sigma(\h^\cC)$.
If $\sigma |_{\h_p^\cC} \neq 0$ then $\sigma(p)\neq 0$. It follows that
$$\{\sigma\in \Sigma(\h^\cC)\mid \sigma(p)=0\} = \{\sigma\in \Sigma(\h^\cC)\mid
\sigma |_{\h_p^{\cC,\reg}}=0\}.$$
This immediately implies the desired conclusion.

By Lemma \ref{lem:gfamWfam} it suffices to show the ``if'' part. 
Let $p,q\in \h^\cC$ and suppose that there is a $w\in W$
with $W_q= wW_pw^{-1}$. By \eqref{eq:CpCq} this implies $w \h_p^{\cC,\circ}
=\h_q^{\cC,\circ}$. By (3) that yields  $w \h_p^{\cC,\reg}=\h_q^{\cC,\reg}$.
Now let $g\in \Nm_{G_0}(\h^\cC)$ induce $w$. Then
$$g\cdot \Zgg_{\g^\cC} (\h_p^{\cC,\reg}) = \Zgg_{\g^\cC} (\h_q^{\cC,\reg}).$$
Hence $g\cdot \Zgg_{\g^\cC} (p) = \Zgg_{\g^\cC} (q)$ and $p,q$ lie in the
same Jordan class. 
\end{proof}

\subsection{Classification of semisimple orbits}\label{sec:clasC}

Let $(\g^\cC,\theta)$ be a $\Z_m$-graded complex semisimple Lie algebra.
Since any two Cartan subspaces of $\g_1^\cC$ are $G_0$-conjugate
(\cite[Theorem 1]{Vinberg1976}), it follows that every semisimple $G_0$-orbit
in $\g_1^\cC$ has a point in any fixed Cartan subspace.

As in the previous section we consider the Weyl group $W(\g^\cC,\theta)$ of
the graded Lie algebra $(\g^\cC,\theta)$.
\cite[Theorem 2]{Vinberg1976} says that two elements of $\h^\cC$ are
$G_0$-conjugate if and only if they are $W(\g^\cC,\theta)$-conjugate. So this
yields
a classification of the semisimple $G_0$-orbits: a set of representatives of
the $W(\g^\cC,\theta)$-orbits on $\h^\cC$ is also a set of representatives of the
semisimple $G_0$-orbits in $\g_1^\cC$. However, using a procedure outlined in
\cite{VE1978} this classification can be refined a bit.

\begin{lemma}\label{lem:redweyl}   
Let $p\in \h^\cC$ and $q_1,q_2 \in \h^{\cC,\circ}_p$. If $q_1=w \cdot q_2$ for a
$w \in W$ then $w \in \Nn_W(W_p)$.
\end{lemma}	

\begin{proof}
Indeed, $W_p= W_{q_1} = W_{wq_2} = wW_{q_2}w^{-1} =wW_pw^{-1}$.
\end{proof}

Write $W=W(\g^\cC,\theta)$. For $p\in \h^\cC$ we set
$$\Gamma_p = \Nn_W(W_p)/W_p.$$

\begin{corollary}\label{cor:Gampconj}
Let $p\in \h^\cC$; then $q_1,q_2\in \h_p^{\cC,\circ}$ are $G_0$-conjugate if and
only if they are $\Gamma_p$-conjugate.
\end{corollary}

There is only a finite number of conjugacy classes of reflection subgroups
of $W(\g^\cC,\theta)$. Not all of these are necessarily of the form $W_p$ for
a $p\in \h^\cC$; however there are $p_1,\ldots,p_N\in \h^\cC$ such that for any
$q\in \h^\cC$ the stabilizer $W_q$ is conjugate to precisely one of the
$W_{p_i}$. Then from \eqref{eq:CpCq}
it follows that every semisimple $G_0$-orbit in $\g_1^\cC$ has a point in exactly
one of the $\h_{p_i}^{\cC,\circ}$. Moreover, the previous corollary says that
two elements of $\h_{p_i}^{\cC,\circ}$ are $G_0$-conjugate if and only if they are
$\Gamma_{p_i}$-conjugate. Hence the union of sets of representatives of the
$\Gamma_{p_i}$-orbits in $\h_{p_i}^{\cC,\circ}$, for $1\leq i\leq N$, is also a set
of representatives of the semisimple $G_0$-orbits in $\g_1^\cC$.

\begin{example}
Let the notation be as in Example \ref{exa:63}. In \cite{gatim} an explicit
Cartan subspace $\h^\cC$ of dimension 3 is given. Furthermore it is shown that
the Weyl group $W$ is isomorphic to the complex reflection group denoted
$G_{26}$ in the Shephard-Todd classification. This group has order 1296.
In \cite{gatim} the spaces
$\h_p^{\cC,\reg}$ up to $W$-conjugacy are classified by direct arguments. Here we
indicate how this can be done by considering the reflection subgroups of $W$.

It is known that $W$ has 16 reflection subgroups up to conjugacy. In the
following table we list their orders along with the number of generating
reflections.

\begin{center}
\begin{tabular}{l|l}
  number of generators & order of subgroups\\
  \hline
0 & 1\\
1 & 2, 3\\
2 & 6, 6, 9, 18, 24\\
3 & 18, 27, 48, 54, 54, 162, 648, 1296
\end{tabular}
\end{center}

Let $H$ be a reflection subgroup and set $\h^\cC_H = \{ q\in \h^\cC \mid g(q)=q
\text{ for all } g\in H\}$. If $H=W_p$ then $\h^\cC_H = \h^\cC_p$. Now let $p\in
\h^\cC_H$. If $W_{p} =H$ then we have found a $p$ such that $H=W_p$. If
$W_p\supsetneq H$ then we consider $\h^\cC_{W_p}$: if this set is equal
to $\h^\cC_H$ then there is no $p'\in \h^\cC$ with $H=W_{p'}$. After trying a few
random $p\in \h^\cC_H$ we end up in one of the two possibilities with high
probability.

In our example we have carried this out with the help of the computational
algebra system {\sf GAP}4, \cite{gap4}. We do not give the explicit form
of the elements that we found, because it would require to also give
all generating reflections, which we want to avoid for reasons of brevity.
Here the computations are very straightforward. If $H$ is the trivial subgroup
then $\h^\cC_H=\h^\cC$ and we quickly find a $p$ such that $H=W_p$. The
1-generated subgroups $H$ both have $\dim \h^\cC_H=2$ and again we quickly
find a suitable $p$. The 2-generated subgroups $H$ all have $\dim \h^\cC_H=1$;
so in these cases it suffices to try any nonzero $p$. It turns out that only
one subgroup of order 6 and the subgroups of order 18 and 24 are of the form
$W_p$ for a $p\in \h^\cC$. The 3-generated subgroups $H$ all have $\dim
\h^\cC_H=0$ so the only subgroup of the form $W_p$ is $W$ itself of order
1296.

If $W_p$ is the trivial subgroup then $\Gamma_p=W$. For the subgroups of
order 2, 3 we have $|\Gamma_p| = 72, 36$ respectively. For all 2-generated
subgroups we have $|\Gamma_p|=6$. We see that we recover exactly the
classification given in \cite{gatim}. 
\end{example}

By the next
corollary all elements of a $\h_{p_i}^{\cC,\circ}$ have the same stabilizer
in $G_0$.

\begin{corollary}\label{cor:cen}
Let $p\in \h^\cC$, and $q_1,q_2\in \h_p^{\cC,\circ}$. Then $\Zz_{G_0}(q_1) =
\Zz_{G_0}(q_2)$ and $\Zz_{\wG}(q_1) = \Zz_{\wG}(q_2)$.
\end{corollary}

\begin{proof}
From Theorem \ref{thm:circreg}, in view of \eqref{eq:centralizer},
it follows that
$\Zgg_{\g^\cC}(q_1) = \Zgg_{\g^\cC}(q_2)$. Since $\Zz_G(q_i)$ is connected for
$i=1,2$ (\cite[Corollary 3.11]{steinberg75}) it follows that
$\Zz_{G}(q_1)=\Zz_{G}(q_2)$. Hence $\Zz_{G_0}(q_1) = G_0\cap \Zz_G(q_1) =
G_0\cap \Zz_G(q_2) = \Zz_{G_0}(q_2)$. Since $\Zz_{\wG}(q_i)$ is the pre-image of
$\Zz_{G_0}(q_i)$ in $\wG$ the second statement follows as well. 
\end{proof}

This corollary makes it possible to classify the mixed elements whose semisimple
part lies in a fixed set $\h_p^{\cC,\circ}$. Indeed, let $q\in \h_p^{\cC,\circ}$ and
consider mixed elements $x=q+e$, where $e$ is nilpotent with $[q,e]=0$.
Set $\a_q^\cC = \Zgg_{\g^\cC}(q)$ and 
consider the graded Lie algebra $(\a_q^\cC,\theta)$. We have that
$e\in \a_{q,1}^\cC$. Moreover, $q+e$, $q+e'$ are $G_0$-conjugate if and only
if $e,e'$ are $\Zz_{G_0}(q)$-conjugate. By Corollary \ref{cor:cen} we have
that $\Zz_{G_0}(q)=\Zz_{G_0}(p)$ and $\a_q^\cC = \a_p^\cC$. So by classifying the
nilpotent elements in $\a_{p,1}^\cC$ under the action of the group $\Zz_{G_0}(p)$
we find a list of nilpotent parts of mixed elements whose semisimple part
lies in $\h_p^{\cC,\circ}$. In other words, one list of semisimple parts
does the job for all semisimple parts from $\h_p^{\cC,\circ}$ at once.

\section{On the classification of real semisimple orbits}\label{sec:realss}

Here we study two questions related to the semisimple orbits of a real form
of a complex $\theta$-representation. The first question concerns the
classification of the real Cartan subspaces. The definition of this concept
in the real case is the same as in the complex case. However, in the real case
it is no longer true that there is just one Cartan subspace up to conjugacy.
The second question asks which complex semisimple orbits have real points.
We show a theorem that characterises those orbits assuming that 
the group $\wG$ has trivial Galois cohomology and also that certain cocycles
can be lifted.

In the sequel, when $U$ is a subspace of $\g$ then by $U^\cC$ we denote the
subspace of $\g^\cC$ spanned by a basis of $U$.

\subsection{Real Cartan subspaces}

Let $(\g^\cC,\theta)$ be a $\Z_m$-graded semisimple Lie algebra. Similarly
to the complex case we define a Cartan
subspace of $\g_1$ to be a maximal subspace consisting of commuting semisimple
elements. One of the main objectives of this section is to show that
$\h\subset \g_1$ is a Cartan subspace if and only if $\h^\cC$ is a Cartan
subspace of $\g_1^\cC$.

Let $U$ be a subspace of $\g$ then by $\Zgg_\g (U)$  we denote the centralizer
of  $U$  in $\g$. Also we set 
\begin{equation}
U_i = U\cap \g_i.  \label{eq:gr1}
\end{equation}

For a Lie algebra $\a$ we let $D(\a)=[\a,\a]$ be its derived subalgebra
and let $Z(\a)$ be its centre.

\begin{lemma}\label{lem:centr1}
Let $\h \subset \g_1$  be a subspace consisting of commuting semisimple
elements. Set $\a=\Zgg_\g (\h)$. 
Then $\a$ is reductive in $\g$, which means that $\a = D(\a)\oplus
Z(\a)$, $D(\a)$ is semisimple and $Z(\a)$ consists of semisimple elements.

Moreover, we have
\begin{equation}
\a = \bigoplus _i \a_i.\label{eq:dec1}
\end{equation}
and
\begin{equation}
D(\a) =  \bigoplus _i D(\a)_i \text{ and } Z(\a) =  \bigoplus _i Z(\a)_i.
\label{eq:dec2}
\end{equation}
\end{lemma}

\begin{proof}
The first assertion is well-known, see for example
\cite[Proposition 20.5.13]{tauyu}.

For $x \in \a$ write $x = \sum _i x_i$, where $x_i \in \g_i$. Let $t\in \h$.
Since  $[x, t ] = 0$ and $[x_i, t ] \in  \g_{i+1}$ it follows
that $[x_i,t]=0$. This proves \eqref{eq:dec1}.
	
Since  $\Zgg_\g(\h)^\cC = \Zgg_{\g ^\cC}(\h^\cC)$ and $\h^\cC$ is invariant under
$\theta$ we see that $\a^\cC$ is invariant under $\theta$.
Furthermore, $D(\a^\cC)$ and $Z(\a^\cC)$ are invariant under $\theta$ as well.
Hence $D(\a^\cC)$ is the direct sum of the spaces $D(\a^\cC)\cap \g_i^\cC$.
So if $x\in D(\a)$ then $x=\sum_i x_i$ with $x_i \in D(\a^\cC)\cap \g_i^\cC$.
As $x\in \g$ we also have $x_i\in \g_i$. Since $D(\a^\cC) = D(\a)^\cC$ we
get $x_i\in D(\a)$, showing the first assertion of \eqref{eq:dec2}. The
second one is proved analogously.
\end{proof}

The following lemma is a version of \cite[Prop. 6]{Vinberg1976}
for  real $\Z_m$-graded  semisimple Lie algebras.

\begin{lemma}\label{lem:carc}
A subspace $\h$ consisting of commuting semisimple elements in $\g_1$ is a
Cartan subspace if and only if $Z(\Zgg_\g(\h))_1 = \h$  and $D(\Zgg_\g(\h))_1$
consists of of nilpotent elements.
\end{lemma}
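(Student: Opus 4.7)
The plan is to prove each direction separately, using Lemma \ref{lem:centr1} (reductivity of $\Zgg_\g(\h)$ and the graded decomposition \eqref{eq:dec2}) and the graded Jordan decomposition from Proposition \ref{prop:gradedjordan}.

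For the forward direction, assume $\h$ is a Cartan subspace. The inclusion $\h\subset Z(\Zgg_\g(\h))_1$ is immediate since $\h$ is abelian and contained in $\g_1$. For the reverse inclusion, take $z\in Z(\Zgg_\g(\h))_1$. Because $\Zgg_\g(\h)$ is reductive by Lemma \ref{lem:centr1}, its center consists of semisimple elements; thus $z$ is a semisimple element of $\g_1$ commuting with every element of $\h$. Since a sum of pairwise commuting semisimple elements is semisimple, the subspace $\h+\R z$ still consists of commuting semisimple elements, and maximality of $\h$ forces $z\in\h$. This gives $Z(\Zgg_\g(\h))_1=\h$.

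Next, to show that $D(\Zgg_\g(\h))_1$ consists of nilpotent elements, pick $x\in D(\Zgg_\g(\h))_1$ and apply Proposition \ref{prop:gradedjordan} to write $x=x_s+x_n$ with $x_s,x_n\in\g_1$ commuting. Since $x\in\Zgg_\g(\h)$ and the Jordan components are polynomials in $x$ (so inherit commutation with $\h$), we have $x_s,x_n\in\Zgg_\g(\h)_1$. Because $D(\Zgg_\g(\h))$ is an algebraic semisimple Lie subalgebra of $\g$, the Jordan components of any of its elements remain in it, so $x_s,x_n\in D(\Zgg_\g(\h))$. Now $x_s$ is a semisimple element of $\g_1$ commuting with $\h$, hence by the maximality argument above $x_s\in\h\subset Z(\Zgg_\g(\h))$. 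Since in a reductive Lie algebra the center and the derived subalgebra meet in $0$, we conclude $x_s=0$, so $x=x_n$ is nilpotent.

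For the reverse direction, assume $Z(\Zgg_\g(\h))_1=\h$ and $D(\Zgg_\g(\h))_1$ consists of nilpotents, and let $x\in\g_1$ be a semisimple element such that $\h+\R x$ consists of commuting semisimple elements. Then $x$ commutes with $\h$, so $x\in\Zgg_\g(\h)_1$. Decomposition \eqref{eq:dec2} from Lemma \ref{lem:centr1} yields a unique splitting $x=z+d$ with $z\in Z(\Zgg_\g(\h))_1=\h$ and $d\in D(\Zgg_\g(\h))_1$. The element $d=x-z$ is a difference of two commuting semisimple elements, hence semisimple, while by hypothesis $d$ is nilpotent; therefore $d=0$ and $x=z\in\h$. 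This shows $\h$ is maximal among subspaces of commuting semisimple elements, i.e.\ a Cartan subspace.

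The main obstacle is the step in the forward direction showing that the semisimple part $x_s$ of $x\in D(\Zgg_\g(\h))_1$ actually lies in $D(\Zgg_\g(\h))$ (as opposed to just $\Zgg_\g(\h)$); this needs the standard but not-quite-trivial fact that an algebraic semisimple Lie subalgebra is stable under abstract Jordan decomposition, together with the reductive decomposition $Z\oplus D$ to contradict $x_s\in\h$ and $x_s\in D(\Zgg_\g(\h))$ simultaneously.
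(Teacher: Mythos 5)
Your proof is correct and follows essentially the same route as the paper: both directions rest on the decomposition \eqref{eq:dec2} from Lemma \ref{lem:centr1}, the graded Jordan decomposition, maximality of $\h$, and the fact that $Z(\Zgg_\g(\h))\cap D(\Zgg_\g(\h))=\{0\}$. The only cosmetic difference is that you take the Jordan decomposition of $x\in D(\Zgg_\g(\h))_1$ in the ambient algebra $\g$ and then argue the components land in $D(\Zgg_\g(\h))$, whereas the paper decomposes directly inside the semisimple algebra $D(\Zgg_\g(\h))$; these are two views of the same standard compatibility of Jordan decompositions, and your version in fact fills in some details the paper leaves implicit.
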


\begin{proof}
Assume that $\h$ is a Cartan subspace. By Lemma \ref{lem:centr1}
$Z(\Zgg_\g(\h))_1$ consists of commuting semisimple elements. So because
$\h \subset  Z(\Zgg_\g(\h))_1$ we see that $Z(\Zgg_\g(\h))_1 = \h$. 
Now let $x \in D(\Zgg_\g(\h))_1$. We  will show that $x$ is nilpotent.
The grading in $\g$ induces a grading in $D(\Zgg_\g(\h))$; see \eqref{eq:dec2}.
We write the Jordan decomposition $x=s+n$ in the real semisimple Lie algebra
$D(\Zgg_\g(\h))$, where $s\in D(\Zgg_\g(\h))_1$ is semisimple and
$n\in D(\Zgg_\g(\h))_1 $ is nilpotent. In particular, $s\in \Zgg_\g(\h)$
so that $s$ commutes with $\h$. As $s$ is semisimple we conclude that
$s\in\h=Z(\Zgg_\g(\h))_1$. On the other hand $s$ is contained in
$D(\Zgg_\g(\h))_1$. Since $Z(\Zgg_\g(\h))\cap D(\Zgg_\g(\h))=\{0\}$
we conclude that $s=0$ and $x=n$ is nilpotent.

Now we assume that $Z(\Zgg_\g(\h))_1 = \h$ and that $D(\Zgg_\g(\h))_1$ consists
of nilpotent elements. Let  $\t\supseteq \h$ be a subspace consisting of
commuting semisimple elements in $\g_1$. Let $x\in \t$. By Lemma
\ref{lem:centr1} we can write $x=s+n$, where $s\in Z(\Zgg_\g(\h))_1$ and
$n\in D(\Zgg_\g(\h))_1$. Hence $s$ is semisimple, $n$ is nilpotent
and $[s,n]=0$. It follows that this is the Jordan decomposition of $x$.
Since $x$ is semisimple we see that $n=0$. Hence $\t\subset Z(\Zgg_\g (\h))_1$,
so $\t = \h$.
\end{proof}

\begin{corollary}\label{cor:complc}
Let $\h\subset \g_1$ be a subspace consisting of commuting semisimple
elements. Then $\h$ is a Cartan subspace of $\g_1$ if and only if
$\h^\cC$ is a Cartan subspace of $\g_1^\cC$.
\end{corollary}

\begin{proof}
Assume  that  $\h \subset \g_1$ is a Cartan subspace. By Lemma \ref{lem:carc},
$D(\Zgg_{\g}(\h))_1$ consists of nilpotent elements. Set $\a = D(\Zgg_{\g}(\h))$.
Consider the $\Z_m$-graded semisimple Lie algebra $(\a^\cC,\theta)$. Let
$A_0$ be the connected subgroup of $G_0$ with Lie algebra $\a_0^\cC$.
By  \cite[Theorem 3]{Vinberg1976}
the set of nilpotent elements in $\a_1^\cC$ coincides with the null cone
with respect to the action of $A_0$.
In other words, if there are non-nilpotent
elements in $\a_1^\cC$ then there is a nontrivial $A_0$-invariant
$p$ in the polynomial ring $\C[\a_1^\cC]$. But then there also exist
$x\in \a_1$ with $p(x)\neq 0$. Such $x$ are not nilpotent in contradiction
with Lemma \ref{lem:carc}. By \cite[Proposition 6]{Vinberg1976} it now follows
that  $\h^\cC$ is a Cartan subspace in $\g_1^\cC$.

In particular it follows that all Cartan subspaces in $\g_1$ have  the same
dimension, which is the  dimension of any Cartan subspace in $\g_1^\cC$.
Now assume that $\h^\cC$  is a Cartan  subspace. Then  $\h$ consists  of
commuting semisimple elements and  $\dim _\R \h = \dim  _\C \h^\cC$.
Hence $\h$ is a real  Cartan  subspace  in $\g_1$.
This completes the  proof  of Corollary \ref{cor:complc}.
\end{proof}

We recall that the $G_0(\R)$-module $V$ is the real form of $V^\cC$ given by
$V=(V^\cC)^\sigma$. We have a linear isomorphism $V\to \g_1$ which is the
restriction of the isomorphism of $G_0$-modules $V^\cC\to \g_1^\cC$.
We say that a subspace of the module $V$ is a Cartan subspace of its image
in $\g_1$ is a Cartan subspace. Now we consider the problem of classifying the
Cartan subspaces in $V$ up to $\wG(\R)$-conjugacy. As we have been considering
Cartan subspaces in $\g_1$ we continue to do so, identifying $V$ and $\g_1$.

Let  $\h$ be  a  fixed Cartan subspace in $\g_1$. By Corollary
\ref{cor:complc} $\h^\cC$ is a  Cartan  subspace  in $\g^\cC_1$.
We set
$$\Nn_0=\Nn_{\wG}(\h^\cC)=\{ g\in\wG \mid g(\h^\cC) = \h^\cC\}.$$

\begin{theorem}\label{thm:cartan}
There is a bijection between  the  $\wG(\R)$-conjugacy classes  of Cartan
subspaces  in $\g_1$  and the kernel  $\ker[\hs \Ho^1 \Nn_0 \to \Ho^1
\wG\hs]$.
\end{theorem}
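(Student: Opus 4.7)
The plan is to model the set of Cartan subspaces as the set of $\R$-points of a homogeneous space $\GG_0/\Nn_0$, and then to invoke the Borel--Serre theorem \cite[I.5.4, Cor.~1 of Prop.~36]{Serre1997} exactly as is done in the proof of Theorem \ref{thm:class}.

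First, I would invoke Vinberg's theorem \cite[Prop.~8]{Vinberg1976} that over $\C$ all Cartan subspaces in $\g_1^\cC$ are conjugate under $\GG_0$. Consequently, the set $Y$ of all Cartan subspaces in $\g_1^\cC$ is a single $\GG_0$-orbit, so that evaluation at $\h^\cC$ gives a $\GG_0$-equivariant bijection $Y\cong \GG_0/\Nn_0$. Since $\h$ is defined over $\R$, so are $\h^\cC$ and its normalizer $\Nn_0=\Nn_{\GG_0}(\h^\cC)$, and so the homogeneous space $\GG_0/\Nn_0$ carries a canonical $\R$-structure.

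Second, I would identify conjugacy classes of real Cartan subspaces with orbits of $\GG_0(\R)$ on $Y(\R)$. Corollary \ref{cor:complc} is the key input: a real subspace $\h'\subset\g_1$ is a Cartan subspace if and only if $(\h')^\cC$ is a Cartan subspace in $\g_1^\cC$; hence, passing to complexification gives an injection from Cartan subspaces of $\g_1$ into $Y$, whose image is exactly the set of $\Gamma$-stable points (equivalently the $\R$-points of $Y$), the inverse being $\tilde{\h}\mapsto \tilde{\h}^\Gamma$. This identification is $\GG_0(\R)$-equivariant, so conjugacy classes of Cartan subspaces in $\g_1$ are exactly the orbits of $\GG_0(\R)$ on $Y(\R)=(\GG_0/\Nn_0)(\R)$.

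Finally, I would apply the Borel--Serre theorem in the form used in Theorem \ref{thm:class}: for a $\Gamma$-equivariant homogeneous space $\GG_0/\Nn_0$ with base point fixed by $\Gamma$, the set of $\GG_0(\R)$-orbits on $(\GG_0/\Nn_0)(\R)$ is canonically in bijection with $\ker[\Ho^1 \Nn_0\to \Ho^1 \GG_0]$. Composing with the bijection from step two yields the desired canonical bijection. The only subtle checks are the two structural facts supplied by previous results, namely Vinberg's transitivity over $\C$ and Corollary \ref{cor:complc}; once these are in hand the Galois cohomology argument is formal and no genuine obstacle arises.
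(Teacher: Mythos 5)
Your proposal is correct and follows essentially the same route as the paper: Vinberg's transitivity theorem over $\C$ makes the set $Y$ of complex Cartan subspaces a homogeneous space for $\GG_0$ with stabilizer $\Nn_0$ at the $\R$-rational base point $\h^\cC$, Corollary \ref{cor:complc} identifies real Cartan subspaces with $Y(\R)$, and the Borel--Serre corollary then gives the bijection with $\ker[\Ho^1\Nn_0\to\Ho^1\GG_0]$. The only cosmetic difference is that the paper endows $Y$ with its $\R$-structure by realizing it as a closed $\R$-subvariety of the Grassmannian ${\rm Gr}(\g_1^\cC,d)$, whereas you obtain it from the quotient presentation $\GG_0/\Nn_0$ with $\Gamma$-fixed base point; these are equivalent.
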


\begin{proof}
Let $d$ denote the dimension of a Cartan subspace  in $\g_1^\cC$.
Let $X= \mathrm{Gr}(\g_1^\cC,d)$ be the set of $d$-dimensional subspaces of
$\g_1^\cC$. Then $\Gamma$ acts on $X$, and a subspace lies in $X^\Gamma$ if and
only if it has a basis of elements in $\g_1$. Also $\wG$ acts on $X$, and 
let $Y$ be the orbit of $\h$. Then by Vinberg's theorem, saying that
two Cartan subspaces in $\g_1^\cC$ are $\wG$-conjugate, immediately implies
that $Y$ consists of all Cartan subspaces in $\g_1^\cC$. The stabilizer of
$\h$ is exactly $\Nn_0$. By Corollary \ref{cor:complc} we see that the elements
of $Y^\Gamma$ are exactly the real Cartan subspaces in $\g_1$.
Hence the theorem follows from Theorem \ref{thm:cohom}
\end{proof}

\begin{remark}  Theorem \ref{thm:cartan} has been stated  as \cite[Theorem 4.4.9]{bgl21}
 with a slightly different  proof.
\end{remark}

\begin{example}
Let the setup be as in Example \ref{exa:realtrivec}. In \cite{bgl} it has been
computed that $\Ho^1 \Nn_0$ is trivial. Therefore there is one real
Cartan subspace up to conjugacy. 
\end{example}

\begin{example}
Let the setup be as in Example \ref{exa:real63}. We let $u_1,\ldots,u_6$,
$v_1,v_2,v_3$ be the standard bases of $\C^6$ and $\C^3$ respectively (or of
$\R^6$ and $\R^3$). By $e_{ijk}$ we denote the basis element $u_i\wedge u_j
\otimes v_k$ of $\wedge^2 \C^6\otimes \C^3$ (or of $\wedge^2 \R^6\otimes \R^3$).
It is shown in \cite{gatim} that there is a Cartan subspace $\h^\cC$
spanned by the elements $e_{121}+e_{342}+e_{563}$, $-e_{451}+e_{162}-e_{233}$,
$e_{361}-e_{252}+e_{143}$. Since these elements have real coefficients, it is
also a basis of a real Cartan subspace $\h$.

With explicit computations in {\sf GAP}4 and {\sc Magma} \cite{magma}
(we used the latter for Gr\"obner basis computations) we determined the
group
$$\Zm_0 = \{ g\in \SL(6,\C)\times \SL(3,\C) \mid g(p)=p \text{ for all }
p\in \h^\cC\}.$$
The identity component of this group is a 1-dimensional torus consisting
of the elements $\diag(t,t^{-1},t,t^{-1},t,t^{-1},1,1,1)$ for $t\in \C^*$.
The group has 27 components which we determined explicitly, but do not report
here.

The group $W$ is generated by three reflections. Again by explicit Gr\"obner
basis computations we determined three elements $g_1,g_2,g_3$
of $\SL(6,\C)\times \SL(3,\C)$
normalizing $\h^\cC$ and inducing the three given reflections. Then
$\Nn_0$ is generated by $g_1,g_2,g_3$ along with $\Zm_0$. From this it is possible to
establish that $\Nn_0$ has 34992 components (and the same identity component
as $\Zm_0$).

Since we have the group $\Nn_0$ explicitly it is not difficult to establish
that $\Ho^1 \Nn_0 =\{ [1], [c]\}$. Furthermore, it is a short calculation
to find a $g\in \SL(6,\C)\times \SL(3,\C)$ with $g^{-1} \bar g = c$. We find
$$c= \SmallMatrix{0&0&0&i&0&0&0&0&0\\
  0&0&0&0&i&0&0&0&0\\
  0&0&0&0&0&i&0&0&0\\
  i&0&0&0&0&0&0&0&0\\
  0&i&0&0&0&0&0&0&0\\
  0&0&i&0&0&0&0&0&0\\
  0&0&0&0&0&0&1&0&0\\
  0&0&0&0&0&0&0&1&0\\
  0&0&0&0&0&0&0&0&1},
g = \SmallMatrix{-i&0&0&1&0&0&0&0&0\\
  \tfrac{1}{2}&0&0&-\tfrac{1}{2}i&0&0&0&0&0\\
  0&i&0&0&-1&0&0&0&0\\
  0&\tfrac{1}{2}&0&0&-\tfrac{1}{2}i&0&0&0&0\\
  0&0&i&0&0&-1&0&0&0\\
  0&0&\tfrac{1}{2}&0&0&-\tfrac{1}{2}i&0&0&0\\
  0&0&0&0&0&0&1&0&0\\
  0&0&0&0&0&0&0&1&0\\
  0&0&0&0&0&0&0&0&1}.
$$
Applying $g$ to the space $\h$ we find a space spanned by the elements
\begin{align*}
  & -4e_{131}+2e_{162}-e_{241}+2e_{252}-4e_{353}+e_{463},\\
  & e_{141}+2e_{152}-e_{231}-\tfrac{1}{2} e_{262} -e_{363}-e_{453},\\
  & e_{123}+e_{342}-e_{561}
\end{align*}
which therefore span a second Cartan subspace of $\g_1$, not conjugate to
$\h$ under the group $\SL(6,\R)\times \SL(3,\R)$. 
\end{example}

\subsection{Classification of real semisimple orbits}

Now we consider the classification of the semisimple $\wG(\R)$-orbits in
$\g_1$ which we identify with the module $V$. We fix a Cartan subspace
$\h$ of $\g_1$. By Corollary \ref{cor:complc} $\h^\cC$ is a Cartan subspace
of $\g_1^\cC$. We let $W=W(\g^\cC,\theta)$ be the corresponding Weyl group.
As $\h^\cC$ is fixed under complex conjugation, also the group $W$ has a
conjugation $w \mapsto \upgam w$, induced by the conjugation on $G_0$. 
As in Section \ref{sec:clasC} we choose $p_1,\ldots,p_N\in \h^\cC$
such that for all $p\in \h^\cC$ we have that $W_p$ is conjugate in $W$ to
exactly one $W_{p_i}$. Then each semisimple $\wG$-orbit in $\g_1^\cC$ has a point
in exactly one of the $\h_{p_i}^{\cC,\circ}$. 

Throughout this section we use the following assumption:
\begin{equation}\label{eq:hyp}
\text{The $p_i$ are chosen such that $p_i\in \h$ (i.e., we have $\upgam p_i
=p_i$).} 
\end{equation}

\begin{remark}
If this hypothesis cannot be satisfied for an $\h_{p_i}^{\cC,\circ}$, that is, if
$\h_{p_i}^{\cC,\circ}$ does not contain real points, then we can do the following.
By the method outlined in the previous subsection (Theorem \ref{thm:cartan})
we can find real Cartan subspaces $\h_1=\h,\h_2,\ldots,\h_r$ such that each
Cartan subspace of $\g_1$ is $\wG(\R)$-conjugate to exactly one of the
$\h_i$. Furthermore we can find $g_i\in \wG(\R)$ with $\h_i = g_i(\h)$.
Then we check whether there is an $i$ such that $g_i(\h_{p_i}^{\cC,\circ})$ has
real points. If this holds for a certan $i$ then we work with the
set $g_i(\h_{p_i}^{\cC,\circ})$ instead of $\h_{p_i}^{\cC,\circ}$. If none of the
sets $g_i(\h_{p_i}^{\cC,\circ})$ has a real point then we conclude that the orbits
of the elements in $\h_{p_i}^{\cC,\circ}$ have no real point.
\end{remark}  

In order to simplify the notation a little bit we let $p$ be one of the $p_i$,
and set $\Ff = \h_p^{\cC,\circ}$. We now consider two problems:
\begin{enumerate}
\item Determine the elements of $\Ff$ whose $\wG$-orbits have real points, and
  for each of those orbits determine a real point.
\item For an orbit that has real points find representatives of the
  $\wG(\R)$-orbits contained in it.  
\end{enumerate}  

As in Section \ref{cor:complc} we consider the group $\Gamma_{p} =
\Nn_W(W_{p})/W_{p}$. Because of our assuption \eqref{eq:hyp} we have $\upgam p = p$,
so that $W_p$ and $\Nn_W(W_{p})$ are both stable under the conjugation of $W$.
Hence $\Gamma_p$ inherits the conjugation from $W$. 
We also set
\begin{align*}
\Zm_{\wG}(\Fm) & = \{ g\in \wG \mid gq=q \text{ for all } q\in \Fm\},\\
\Nm_{\wG}(\Fm) & = \{ g\in \wG \mid gq\in \Fm \text{ for all } q\in \Fm\}.
\end{align*}

Define a map $\varphi : \Nm_{\wG}(\Fm)\to \Gamma_p$ in the following way.
Let $g\in \Nm_{\wG}(\Fm)$, then $g\cdot p = q\in \Fm$. So by Corollary
\ref{cor:Gampconj} there is a $w\in \Nn_W(W_p)$ such that $wp=q$. Set
$\varphi(g) = wW_p \in \Gamma_p$. Note that this is well-defined: let $w'\in
\Nn_W(W_p)$ also satisfy $w'p=q$; then $(w')^{-1}w \in W_p$ so that
$w'W_p = wW_p$.

The following lemma and proposition have been proved in \cite{bgl} for the
special case considered in that paper. Here we prove these results under
our more general assumptions. However, the proofs are very similar to those
in \cite{bgl}. 

\begin{lemma}\label{lem:phihom}
$\varphi$ is a surjective group homomorphism with kernel $\Zm_{\wG}(\Fm)$.
Moreover, for $g\in \Nm_{\wG}(\Fm)$ and $q\in \Fm$ we have $g\cdot q =
\varphi(g)\cdot q$.   
\end{lemma}

\begin{proof}
First we claim the following: let $g\in \Nm_{\wG}(\Fm)$ and $w\in \Nm_W(W_p)$
be such that $g\cdot p = w\cdot p$, then $g\cdot q = w\cdot q$ for all $q\in
\Fm$. Indeed, there is a $\hat w\in \Nm_{\wG}(\h^\cC)$ that induces $w$.
Then $\hat w^{-1} g\in \Zz_{\wG}(p)$. By Corollary \ref{cor:cen} this yields
$\hat w^{-1} g\in \Zz_{\wG}(q)$, so that $g\cdot q = \hat w \cdot q = w\cdot q$.

Now let $g_1,g_2\in \Nm_{\wG}(\Fm)$ and let $w_1,w_2\in \Nm_W(W_p)$ be such
that $w_i\cdot p = g_i \cdot p$ for $i=1,2$. Then by our claim it follows that
$g_1g_2\cdot p = g_1\cdot (g_2\cdot p) = g_1\cdot (w_2\cdot p) = w_1\cdot
(w_2\cdot p) = w_1w_2\cdot p$. Hence $\varphi(g_1g_2) = \varphi(g_1)
\varphi(g_2)$. We see that $\varphi$ is a group homomorphism.

A $g\in  \Nm_{\wG}(\Fm)$ lies in the kernel of $\varphi$ if and only if
$g\cdot p =p$. By our claim this implies that $g\cot q = q$ for all $q\in \Fm$.
We conclude that $\Zm_{\wG}(\Fm)$ is the kernel of $\varphi$.

Let $w\in \Nm_W(W_p)$. Then there is a $\hat w \in \Nm_{\wG}(\h^\cC)$ such that
$\hat w\cdot q = w\cdot q$ for all $q\in \h^\cC$. In particular $\hat w \in
\Nm_{\wG} (\Fm)$. From the definition if $\varphi$ it is clear that
$\varphi(\hat w) = w$. So $\varphi$ is surjective.

The last statement of the lemma follows immediately from our claim above. 
\end{proof}  

The next proposition says how we can attack the first of our problems, in
the important special case where $\Ho^1 \wG = 1$. 

\begin{proposition}
Suppose that $\Ho^1 \wG=1$. Write $\Ho^1 \Gamma_p = \{ [\delta_1],\ldots,
[\delta_s]\}$, where $\delta_i\in \Gamma_p$ are cocycles. Suppose further
that for $1\leq i\leq s$ there is a cocycle $n_i \in \Nm_{\wG}(\Fm)$ with
$\varphi(n_i) = \delta_i$. Let $g_i\in \wG$ be such that $g_i^{-1}
\upgam g_i = n_i$. Let $q\in \Fm$ and let $\Oo = \wG\cdot q$ be its orbit.
Then $\Oo$ has real points if and only if there is a $q'\in \Oo\cap \Fm$
with $\upgam q' =  \delta_i^{-1} q'$ for an $i$ with $1\leq i\leq s$.
In that case $g_i\cdot q'$ is a real point of $\Oo$.
\end{proposition}

\begin{proof}
First note that the elements $g_i$ exist because $\Ho^1 \wG = 1$.

Suppose that $\Oo$ has a real point $g\cdot q$ for a $g\in \wG$. Then
$\upgam (g\cdot q ) = g\cdot q$. But that implies that $\upgam q =
(\upgam g)^{-1} g \cdot q = n^{-1} \cdot q$ with $n=g^{-1} \upgam g$.
We have $n \upgam n = g^{-1} \upgam g \upgam g^{-1} g = 1$ so that $n$ is a
cocycle.

Note that since $p$ is real, the set $\Fm$ is closed under $r\mapsto \upgam r$.
The above argument also shows that $\upgam q$ and $q$ are $\wG$-conjugate.
Since they both lie in $\Fm$ they are $\Gamma_p$-conjugate (Corollary
\ref{cor:Gampconj}) hence there is a $w\in \Nm_W(W_p)$ such that $w\cdot q =
\upgam q$. Let $\hat w\in \Nm_{\wG}(\Fm)$ be such that $\varphi(\hat w) = w$.
Then $n\hat w \in \Zz_{\wG}(q)$. By Corollary \ref{cor:cen} it follows that
$n\hat w \in \Zz_{\wG}(r)$ for all $r\in \Fm$. In other words, $n^{-1}\cdot r =
\hat w \cdot r$ for all $r\in \Fm$. Therefore $n$ lies in $\Nm_{\wG}(\Fm)$.

Set $\delta = \varphi(n)$. As $\varphi$ is $\Gamma$-equivariant we have
that $\delta$ is a cocycle in $\Gamma_p$. So there is an $i$ and a $\beta\in
\Gamma_p$ such that $\delta = \beta^{-1} \delta_i \upgam \beta$. Then
$\upgam q = \delta^{-1}\cdot q = \upgam \beta^{-1} \delta_i^{-1} \beta q$.
So if we set $q' = \beta q$, then $q'\in \Oo\cap \Fm$ and
$\upgam q' = \delta_i^{-1} q$.

Conversely, if $\upgam q' = \delta_i^{-1} q'$ then also $\upgam q' = n_i^{-1} q'$.
The latter is equivalent to $g_i q' = \upgam (g_iq')$. In particular,
$\Oo$ has a real point which is $g_1q'$. 
\end{proof}

So in order to find the elements of $\Fm$ whose orbits have real points we
determine the sets $C_i = \{ q\in \Fm \mid \upgam q = \gamma_i^{-1} q\}$.
Then the union of the $C_i$ is exactly the set that we require. Moreover,
if we set $R_i = \{ g_iq \mid q\in C_i\}$. Then the elements in $R_i$ are real
points of the orbits of the elements in $C_i$. (Note, however, that $R_i$ is
not necessarily contained in $\h^\cC$.)

Now we consider the second problem.
let $\Oo$ be a semisimple $\wG$-orbit, with given real point $q$. 
To determine the $\wG(\R)$-orbits contained in $\Oo$ we use Theorem
\ref{thm:cohom}. Then the $\wG(\R)$-orbits in $\Oo$ are in bijection with
$\ker[\Ho^1 \Zz_{\wG}(q)\to \Ho^1 \wG]$.
Here we note one important fact: the sets $R_i$ are, in general, infinite,
but all elements of $R_i$ have the same centralizer in $\wG$ by Corollary
\ref{cor:cen}. Therefore we have to compute $\ker[\Ho^1 \Zz_{\wG}(q)\to \Ho^1
\wG]$ only for one element $q$ of $R_i$. 

\section*{Acknowledgement} The authors  wish to thank Mikhail Borovoi  for   fruitful  discussions over  their  joint  works \cite{bgl}, \cite{bgl21} which  stimulate  the  results  in the  present  article.

\end{document}